\theoremstyle{plain}
\newtheorem{thm}{Theorem}
\newtheorem{lem}[thm]{Lemma}
\newtheorem{cor}[thm]{Corollary}
\title{Linear bounds on matrix extremal functions using visibility hypergraphs}
\date{}
\author{Jesse Geneson\thanks{Supported by the NSF Graduate Research Fellowship under Grant No. 1122374.} \qquad Lilly Shen\\
\small Department of Mathematics\\[-0.8ex]
\small MIT\\[-0.8ex]
\small Massachusetts, U.S.A.\\
\small\tt geneson@math.mit.edu\\
\small\tt lshen01@college.harvard.edu
}
\begin{document}
\maketitle

\begin{abstract}
The $0-1$ matrix $A$ contains a $0-1$ matrix $M$ if some submatrix of $A$ can be transformed into $M$ by changing some ones to zeroes. If $A$ does not contain $M$, then $A$ avoids $M$. Let $ex(n, M)$ be the maximum number of ones in an $n \times n$ $0-1$ matrix that avoids $M$, and let $ex_{k}(m, M)$ be the maximum number of columns in a $0-1$ matrix with $m$ rows that avoids $M$ and has at least $k$ ones in every column. A method for bounding $ex(n, M)$ by using bounds on the maximum number of edges in bar visibility graphs was introduced in (R. Fulek, Discrete Mathematics 309, 2009). By using a similar method with bar visibility hypergraphs, we obtain linear bounds on the extremal functions of other forbidden $0-1$ matrices.
\end{abstract}

\section{Introduction}
For a collection $S$ of $0-1$ matrices, let the \emph{weight extremal function} $exs(n, S)$ denote the maximum number of ones in an $n \times n$ $0-1$ matrix which avoids every matrix in $S$, and let the \emph{column extremal function} $exs_{k}(m, S)$ denote the maximum number of columns in a $0-1$ matrix with $m$ rows which avoids every matrix in $S$ and has at least $k$ ones in every column.

The $0-1$ matrix weight extremal function was used to bound the complexity of an algorithm for minimizing rectilinear path distance while avoiding obstacles \cite{Mit}. Weight extremal functions of $0-1$ matrices have also been used to bound the maximum number of unit distances in a convex $n$-gon \cite{Fure}, the number of edges in ordered graphs on $n$ vertices avoiding fixed ordered subgraphs \cite{KM, PT}, and the maximum length of sequences which avoid fixed subsequences \cite{Pet}.

The column extremal function of matrices is the analogue of a sequence extremal function, the maximum number of letters in a sequence with $m$ contiguous blocks of distinct letters that avoids a fixed subsequence (or collection of subsequences) and has at least $k$ occurrences of each letter. This sequence extremal function was bounded for alternations and collections of sequences called $(r, s)$-formations in \cite{Niv}. The column extremal functions for collections of matrices called doubled $(r, s)$-formations were bounded in \cite{CK}. Both the weight and the column extremal functions of permutation matrices were used in \cite{Fox} to improve upper bounds on the Stanley-Wilf limits of forbidden permutations. 

Since an $n \times n$ matrix has $n^{2}$ entries, then $exs(n, S) \leq n^{2}$ for every collection $S$ of $0-1$ matrices. If $S$ contains only one matrix $M$, then we write $exs(n, S) = ex(n, M)$ and $exs_{k}(m, S) = ex_{k}(m, M)$. If $M$ has at least two ones, then $ex(n, M) \geq n$ since a matrix with ones only in a single column or a matrix with ones only in a single row avoids $M$.

Furedi and Hajnal asked for a characterization of all $0-1$ matrices $P$ such that $ex(n, P) = O(n)$ \cite{FH}. A corresponding problem for the column extremal function is to characterize all $0-1$ matrices $P$ with $j$ rows such that for every fixed $k \geq j$, $ex_{k}(m, P) = O(m)$. Another related problem is to characterize all $0-1$ matrices $P$ for which $ex_{k}(m, P) = O(\frac{m}{k})$.

Fulek \cite{Ful} found bounds on the extremal functions of the patterns $L_{1}$ and $L_{2}$ (Figure \ref{Lpatt}) using visibility representations constructed by treating rows of a given $n \times n$ matrix as vertices and projections of ones on lower rows as edges. He bounded the number of ones in the matrix by limiting the multiplicity of edges in the visibility representation based on the forbidden $0-1$ matrices.

\begin{figure}[htbp]
\centering

\begin{tabular}{cc}
$\begin{pmatrix}
0 & 1 & 1 & 0 \\
1 & 0 & 0 & 1 \\
0 & 1 & 0 & 0 
\end{pmatrix}$
&
$\begin{pmatrix}
0 & 1 & 1 & 1 & 0\\
1 & 0 & 0 & 0 & 1\\
0 & 0 & 1 & 0 & 0
\end{pmatrix}$ \\
\end{tabular}
\caption{$L_1$ and $L_{2}$}\label{Lpatt}
\end{figure}

In Section \ref{prelim} we prove some general facts about $exs(n, S)$ and $exs_{k}(m, S)$. In Section \ref{bar} we define bar visibility hypergraphs, bound their number of edges, and use these bounds to show that $ex(n, L_{3}) = O(n)$ and $ex_{k}(m, L_{3}) = O(\frac{m}{k})$ (Figure \ref{Lpatt1}). We also bound the extremal functions of forbidden collections of $0-1$ matrices corresponding to bar visibility hypergraphs.

\begin{figure}[htbp]
\centering

\begin{tabular}{cc}
$\begin{pmatrix}
0 & 1 & 1 & 1 & 0\\
1 & 0 & 0 & 0 & 0\\
0 & 0 & 0 & 0 & 1\\
0 & 0 & 1 & 0 & 0
\end{pmatrix}$ \\
\end{tabular}
\caption{$L_3$}\label{Lpatt1}
\end{figure}

\section{Facts about $exs(n, S)$ and $exs_{k}(m, S)$}\label{prelim}

If $k > m$, then $exs_{k}(m, S) = 0$ for any collection $S$. If $k \leq m$ and every matrix in $S$ has at least $k$ rows with ones, then $exs_{j}(m, S) = \infty$ for $j < k$ since a matrix with $m$ rows, any number of columns, and ones in every entry in the first $j$ rows avoids every matrix in $S$. 

If some matrix in $S$ has $k$ rows and $c$ columns, then $exs_{j}(m, S) \leq (c-1) {m \choose k}$ for all $j \geq k$. This is because a matrix $A$ with $(c-1) {m \choose k}+1$ columns, in which every column contains at least $j$ ones, must contain $c$ columns with ones in the same $k$ rows by the pigeonhole principle, so $A$ contains the matrix in $S$ with $k$ rows and $c$ columns.

Let $ex(m,n,P)$ (resp. $exs(m,n,S)$) be the maximum weight of an $m \times n$ matrix which avoids the $0-1$ matrix $P$ (resp. collection $S$), so that $ex(n, P) = ex(n, n, P)$. For every column of $P$, draw a segment connecting the topmost and bottommost ones. Call $P$ range-overlapping if, for every pair of columns of $P$, there exists a horizontal line passing through the corresponding segments of both columns.

\begin{figure}[htbp]
\begin{center}
\includegraphics[scale=.3]{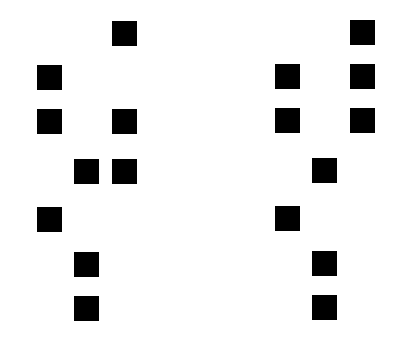}
\caption{The pattern on the left is range-overlapping. The pattern on the right is not range-overlapping because its final two columns have disjoint ranges.}
\label{rangeoverlap1}
\end{center}
\end{figure}

\begin{thm}
\label{rangeo}
For any range-overlapping $P$, $ex(m,n,P) \le k(ex_k(m,P)+n)$.
\end{thm}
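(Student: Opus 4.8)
The plan is to fix an $m \times n$ matrix $A$ that avoids $P$ and whose weight equals $ex(m,n,P)$, and to charge its ones against the column extremal function by chopping each column into blocks of exactly $k$ ones. First I would, for each column $j$ with $c_j$ ones, read the ones from top to bottom and partition them into $\lfloor c_j/k \rfloor$ consecutive blocks of $k$ ones together with a remainder of at most $k-1$ ones. Summing over columns gives
\[
ex(m,n,P) = \sum_{j=1}^n c_j = k \sum_{j=1}^n \left\lfloor \frac{c_j}{k} \right\rfloor + \sum_{j=1}^n (c_j \bmod k) \le k \sum_{j=1}^n \left\lfloor \frac{c_j}{k} \right\rfloor + (k-1)n,
\]
so it suffices to show that $\sum_j \lfloor c_j/k\rfloor \le ex_k(m,P)$; the leftover $(k-1)n$ is then absorbed into the $kn$ term of the claimed bound.

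To do this I would assemble an auxiliary matrix $B$ with $m$ rows whose columns are exactly these blocks, ordered first by the index $j$ of the original column they come from and then, within a given column, from the topmost block to the bottommost block. Every column of $B$ has exactly $k$ ones, and $B$ has $\sum_j \lfloor c_j/k\rfloor$ columns, so the whole claim reduces to proving that $B$ avoids $P$; the definition of $ex_k(m,P)$ then finishes the argument.

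The heart of the proof --- and the step I expect to be the main obstacle --- is showing that $B$ avoids $P$, and this is exactly where \emph{range-overlapping} enters. Two blocks coming from the same original column occupy disjoint sets of rows with one lying strictly above the other, so their vertical ranges are disjoint, and the block placed earlier in $B$ is the higher one. Suppose for contradiction that $P$ embeds into $B$ via rows $r_1 < \cdots < r_a$ and columns $s_1 < \cdots < s_b$. Since the row map of an embedding is increasing, any two columns of $P$ whose ranges overlap have images in $B$ whose ranges also overlap (a row index common to both ranges maps to a common image row). As $P$ is range-overlapping, every pair of its columns has overlapping ranges, so no two columns of the embedded copy can land in two blocks of the same original column, because those blocks have disjoint ranges. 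Hence the $b$ columns of the copy come from $b$ distinct original columns, and reading them left to right yields strictly increasing original indices $j_1 < \cdots < j_b$. Because each block is a subset of the ones of its originating column of $A$, the same rows $r_1 < \cdots < r_a$ together with the columns $j_1 < \cdots < j_b$ exhibit a copy of $P$ in $A$, contradicting that $A$ avoids $P$. This proves that $B$ avoids $P$, and the displayed chain of inequalities then gives $ex(m,n,P) \le k\,ex_k(m,P) + (k-1)n \le k(ex_k(m,P)+n)$.
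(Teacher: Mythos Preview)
Your proof is correct and follows essentially the same approach as the paper: chop each column of an extremal $A$ into consecutive blocks of $k$ ones (discarding the at most $k-1$ leftovers per column), assemble the blocks as the columns of a new matrix ordered by original column and then by block height, and use the range-overlapping hypothesis to show that any copy of $P$ in the new matrix must use blocks from pairwise distinct original columns, hence already lies in $A$. Your write-up is in fact more careful than the paper's in justifying why two blocks from the same original column cannot host two columns of an embedded $P$.
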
  

\begin{proof}
Let $A$ be a $0-1$ matrix with $m$ rows, $n$ columns, and $ex(m,n,P)$ ones which avoids $P$.  Consider column $c$ in $A$. Starting from the top of $c$, divide its ones into clusters of size $k$, deleting the at most $k-1$ remaining ones in the column. Move each cluster $g$ after the first cluster horizontally to a new column immediately to the right of the column containing the cluster which was above $g$ in the original column. Delete any columns with no ones. Call the newly formed matrix $A'$, and suppose that $A'$ has $n'$ columns.

Suppose for contradiction that $A'$ contains $P$, and consider two cases. If the copy of $P$ in $A'$ contained at least two columns that originated from the same column $c$, then $P$ could not be range-overlapping because the construction separated columns of $A$ into clusters with non-range-overlapping row indices to obtain $A'$. If the copy of $P$ in $A'$ only contained columns derived from different original columns, then the original matrix $A$ also contained $P$, a contradiction.

Therefore, $A'$ cannot contain $P$.  In our construction, we deleted a maximum of $n(k-1)$ ones. Each column of $A'$ contains $k$ ones, so $ex(m,n,P) \le k(n'+n) \le k(ex_k(m,P)+n)$.
\end{proof}

\begin{lem}\label{reverse}
If $S$ is a collection of $0-1$ matrices, $c$ is a constant, and $g$ is a function such that $exs(m, n, S) \leq g(m)+c n$ for all $m$ and $n$, then $exs_{k}(m, S) \leq \frac{g(m)}{k-c}$ for $k > c$.
\end{lem}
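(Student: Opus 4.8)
The plan is a double count of the number of ones in a matrix satisfying the constraints that define $exs_{k}(m, S)$: the lower count comes from the requirement of at least $k$ ones in every column, and the upper count comes from the hypothesized weight bound $exs(m,n,S) \le g(m) + cn$. Matching the two gives the desired bound.

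Concretely, I would take an arbitrary $0-1$ matrix $A$ with $m$ rows and $N$ columns that avoids every matrix in $S$ and has at least $k$ ones in each column. Counting the ones of $A$ column by column, the requirement of at least $k$ ones per column forces the weight of $A$ to be at least $kN$. Counting the ones a second way, $A$ is an $m \times N$ matrix avoiding $S$, so by definition its weight is at most $exs(m, N, S)$, which by hypothesis is at most $g(m) + cN$.

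Combining the two estimates yields $kN \le g(m) + cN$, hence $(k - c) N \le g(m)$. Since $k > c$, the factor $k - c$ is positive, so dividing through gives $N \le \frac{g(m)}{k-c}$. As this holds for every admissible $A$, and $exs_{k}(m, S)$ is the supremum of such $N$, the same bound holds for $exs_{k}(m, S)$, which is the claim.

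I do not expect a genuine obstacle here, since the argument is a clean double count. The only point needing a word of care is that $exs_{k}(m, S)$ is a priori a supremum that could be infinite; this is handled automatically, because the inequality $(k-c)N \le g(m)$ constrains the column count of \emph{every} finite matrix meeting the hypotheses, forcing each such count (and hence the supremum) to be at most $\frac{g(m)}{k-c}$. The hypothesis $k > c$ is essential, as it is exactly what makes $k - c$ positive and the final division valid.
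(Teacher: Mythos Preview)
Your proof is correct and is essentially the same double-counting argument as the paper's: bound the weight from below by $kN$ using the per-column requirement and from above by $g(m)+cN$ using the hypothesis, then rearrange. If anything, your version is slightly more careful in taking an arbitrary admissible matrix and passing to the supremum, whereas the paper works directly with a matrix realizing $exs_{k}(m,S)$.
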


\begin{proof}
Fix $k > c$. Any matrix with $m$ rows, $exs_{k}(m, S)$ columns, and $k$ ones per column which avoids $S$ has at most $exs(m, exs_{k}(m, S), S) \leq g(m) + exs_{k}(m, S) c$ ones. Therefore $exs_{k}(m, S) k \leq g(m)+ exs_{k}(m, S) c$, so $exs_{k}(m, S) \leq \frac{g(m)}{k-c}$ for all $m$.
\end{proof}

\begin{cor}\label{linear}
If $S$ is a collection of $0-1$ matrices such that $exs(n, S) = O(n)$ and every matrix in $S$ contains the $2 \times 2$ identity matrix or two ones in the same row, then $exs_{k}(m, S) = \theta(\frac{m}{k})$. 
\end{cor}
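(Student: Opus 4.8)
The plan is to prove matching upper and lower bounds, letting the two hypotheses drive the two directions: the bound $exs(n,S)=O(n)$ gives the upper bound $exs_k(m,S)=O(m/k)$ through Lemma~\ref{reverse}, while the structural assumption on the matrices of $S$ gives the lower bound $exs_k(m,S)=\Omega(m/k)$ through an explicit construction.

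\textbf{Upper bound.} To invoke Lemma~\ref{reverse} I first need a bound of the shape $exs(m,n,S)\le g(m)+cn$, but the hypothesis only bounds square matrices, so I would pass to the rectangular case by padding. An $m\times n$ matrix avoiding $S$ embeds into an $N\times N$ matrix with $N=\max(m,n)\le m+n$ by adjoining zero rows or columns; since every one of the padded matrix lies in the original block, adding zeros cannot create a copy of any $M\in S$, so the padded matrix still avoids $S$. Writing $exs(N,S)\le CN$ for a constant $C$ and all large $N$, this yields $exs(m,n,S)\le C(m+n)$, i.e.\ $g(m)=Cm+O(1)$ and $c=C$. Lemma~\ref{reverse} then gives $exs_k(m,S)\le\frac{g(m)}{k-C}$ for $k>C$, and for $k\ge 2C$ this is at most $\frac{2(Cm+O(1))}{k}=O(m/k)$. (For the bounded range of $k$ below this constant the estimate is not of the form $O(m/k)$ in general—indeed $exs_1(m,S)$ can be infinite—so the $\Theta$ statement is understood for $k$ exceeding $C$.)

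\textbf{Lower bound.} Here I would exhibit, for $C<k\le m$, an $m\times\lfloor m/k\rfloor$ matrix avoiding $S$ with exactly $k$ ones per column. The key observation is that since every $M\in S$ contains the $2\times2$ identity $I_2$ or two ones in a common row, any matrix avoiding both $I_2$ and the $1\times2$ all-ones pattern automatically avoids every $M\in S$, by transitivity of containment. A single construction meets both conditions: partition the rows into consecutive blocks of size $k$ and, in column $j$ (counting from the left, $j=0,1,\dots$), place one such block of $k$ ones, arranging the blocks so that the block moves \emph{upward} as $j$ increases. Each row then carries at most one one, so the matrix avoids the $1\times2$ pattern; and for any two ones in distinct columns $c_1<c_2$ the corresponding rows satisfy $r_1>r_2$, so no $I_2$ occurs. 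This matrix has $\lfloor m/k\rfloor$ columns, giving $exs_k(m,S)\ge\lfloor m/k\rfloor=\Omega(m/k)$ in this regime.

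Combining the two bounds yields $exs_k(m,S)=\Theta(m/k)$. I expect the main obstacle to be the upper bound's reduction: one must convert the square hypothesis $exs(n,S)=O(n)$ into a rectangular bound of the exact form demanded by Lemma~\ref{reverse}, and then track the constant $C$ carefully to see that the resulting estimate is genuinely $O(m/k)$ only once $k$ exceeds it. The lower bound, by contrast, reduces to the short verification that the monotone block-staircase simultaneously avoids the two ``atomic'' patterns $I_2$ and $1\times2$.
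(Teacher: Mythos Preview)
Your proposal is correct and follows essentially the same approach as the paper. For the upper bound, the paper simply notes $exs(m,n,S)\le\max\{exs(m,S),exs(n,S)\}$ and invokes Lemma~\ref{reverse}; your padding argument is exactly how one justifies that inequality, and your caveat about the range of $k$ is an honest accounting of a detail the paper leaves implicit. For the lower bound, the paper builds the same monotone block-staircase (it describes it as a blown-up identity matrix reflected over a vertical line) and asserts that it avoids $S$; your explanation via simultaneous avoidance of $I_2$ and the $1\times 2$ pattern is the natural justification of that assertion.
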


\begin{proof}
Since $exs(m, n, S)  \leq max\left\{exs(m, S), exs(n, S)\right\}$, the upper bound follows from Lemma \ref{reverse}. For the lower bound, let $K$ be the matrix with $m$ rows and $\lfloor \frac{m}{k} \rfloor$ columns obtained from an $\lfloor \frac{m}{k} \rfloor \times \lfloor \frac{m}{k} \rfloor$ identity matrix by replacing every row $r$ in the identity matrix with the $k \times \lfloor \frac{m}{k} \rfloor$ submatrix consisting of $k$ adjacent copies of $r$ and then adding rows full of zeroes at the bottom until $K$ has $m$ rows. Let $K'$ be obtained from $K$ by reflecting over a vertical line. Then $exs_{k}(m, S) \geq \frac{m}{k}-1$ since $K'$ avoids $S$.
\end{proof}

Marcus and Tardos \cite{MT} showed that $ex(n, P) = O(n)$ for any permutation matrix $P$, solving the Stanley-Wilf conjecture. Call a $0-1$ matrix \emph{light} if it has no pair of ones in the same column. The Marcus Tardos theorem was generalized in \cite{G} to show that $ex(n, Q) = O(n)$ for any light $0-1$ matrix $Q$ such that for every two columns $c_{0}$ and $c_{1}$ in $Q$ with ones in the same row $r$, all columns between $c_{0}$ and $c_{1}$ also contain ones in row $r$. Corollary \ref{linear} can be applied to the matrices $Q$ to show that $ex_{k}(m, Q) = \theta(\frac{m}{k})$, as well as to the matrices $L_{1}$ and $L_{2}$ in \cite{Ful}.

Let $P_{r,c}$ denote the $r \times c$ matrix filled with ones. We find the exact value of $ex_k(m,P_{k,c})$ using a simple counting argument, and then bound $ex_k(m,P_{r,c})$ for all $k \geq r$. 

\begin{lem}
$ex_{k}(m,P_{k,c})= (c-1){m \choose k}$.
\end{lem}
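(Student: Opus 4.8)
The plan is to prove matching upper and lower bounds on $ex_{k}(m,P_{k,c})$. For the upper bound I would simply invoke the pigeonhole argument already recorded in Section~\ref{prelim}: taking $S = \{P_{k,c}\}$, the single matrix has exactly $k$ rows and $c$ columns, so with $j=k$ the stated inequality gives $ex_{k}(m,P_{k,c}) = exs_{k}(m,\{P_{k,c}\}) \le (c-1){m \choose k}$. Concretely, in any $m$-row matrix with more than $(c-1){m \choose k}$ columns and at least $k$ ones per column, assigning to each column some $k$-subset of rows in which it has ones forces, by pigeonhole, at least $c$ columns to share the same $k$-subset $R$; the submatrix on rows $R$ and those $c$ columns is an all-ones $k \times c$ block, i.e.\ a copy of $P_{k,c}$.

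For the lower bound I would exhibit a matrix meeting this count. The construction is: for each of the ${m \choose k}$ size-$k$ subsets $R$ of the $m$ rows, include exactly $c-1$ columns whose ones occupy precisely the rows of $R$ and nothing else. This produces a matrix $B$ with $m$ rows, $(c-1){m \choose k}$ columns, and exactly $k$ ones in every column, so it is a legal competitor for $ex_{k}(m,P_{k,c})$.

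It remains to check that $B$ avoids $P_{k,c}$, and this is where the one delicate point lies. Since $P_{k,c}$ is all ones, $B$ contains it if and only if $B$ has a $k \times c$ all-ones submatrix, say on rows $R' = \{r_{1}, \dots, r_{k}\}$ and some $c$ columns. Each of these $c$ columns has ones in all of $R'$; but every column of $B$ carries exactly $k$ ones, so a column containing all of $R'$ must have its ones-set equal to $R'$. Hence all $c$ columns belong to the single group associated with the subset $R'$ --- yet that group contains only $c-1$ columns, a contradiction. Therefore $B$ avoids $P_{k,c}$ and $ex_{k}(m,P_{k,c}) \ge (c-1){m \choose k}$, which together with the upper bound yields equality. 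The main (and essentially only) obstacle is bookkeeping the containment direction correctly: one must use that every column has \emph{exactly} $k$ ones in order to convert ``contains all of $R'$'' into ``equals $R'$'', which is precisely what caps the number of eligible columns at $c-1$.
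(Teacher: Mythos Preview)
Your proof is correct and follows essentially the same approach as the paper: the same pigeonhole upper bound (which you rightly note is already recorded in Section~\ref{prelim}) and the same lower-bound construction using $c-1$ columns for each $k$-subset of rows. The only difference is that you spell out explicitly why the construction avoids $P_{k,c}$ (using that each column has \emph{exactly} $k$ ones), whereas the paper simply asserts this.
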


\begin{proof}
There exist ${m \choose k}$ possible configurations of $k$ ones in columns of height $m$.  By the pigeonhole principle, a matrix with $(c-1){m \choose k}+1$ columns must have a configuration of $k$ ones that occurs in at least $c$ columns. Therefore, $ex_k(m,P_{k,c}) \le (c-1){m \choose k}$.  We can obtain the same lower bound by constructing a matrix with every possible configuration $c-1$ times; this matrix avoids $P_{k,c}$. Hence, $ex_k(m,P_{k,c}) = (c-1){m \choose k}$.
\end{proof}

\begin{thm}
For all fixed $k \geq r$, $ex_k(m,P_{r,2}) = \theta(m^r)$.
\end{thm}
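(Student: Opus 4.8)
The plan is to reinterpret the columns of a $P_{r,2}$-avoiding matrix as subsets of the row set $[m] = \{1, \dots, m\}$: identify each column with the set of rows in which it has a one. Since $P_{r,2}$ is the $r \times 2$ all-ones matrix, a matrix $A$ contains $P_{r,2}$ precisely when two of its columns have ones in $r$ common rows. Hence $A$ avoids $P_{r,2}$ exactly when any two of the associated subsets meet in at most $r-1$ elements. Under the constraint that every column has at least $k$ ones, computing $ex_k(m,P_{r,2})$ thus amounts to finding the maximum size of a family $\mathcal{F}$ of subsets of $[m]$, each of size at least $k$, with pairwise intersections of size at most $r-1$.

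For the upper bound $O(m^r)$, I would count $r$-element subsets. Each member of $\mathcal{F}$ has size at least $k \geq r$, so it contains at least ${k \choose r}$ distinct $r$-subsets of $[m]$. Crucially, no $r$-subset can lie inside two members of $\mathcal{F}$: if it did, those two sets would share at least $r$ rows, contradicting the pairwise-intersection bound (equivalently, producing a copy of $P_{r,2}$). Therefore the $r$-subsets contributed by distinct members of $\mathcal{F}$ are all distinct, and since $[m]$ has only ${m \choose r}$ such subsets, we get $|\mathcal{F}| \cdot {k \choose r} \leq {m \choose r}$, i.e. $ex_k(m,P_{r,2}) \leq {m \choose r}/{k \choose r} = O(m^r)$ for fixed $k$ and $r$.

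For the matching lower bound $\Omega(m^r)$, I would build $\mathcal{F}$ greedily from $k$-element subsets. Start with all ${m \choose k} = \Theta(m^k)$ subsets of size $k$ available; repeatedly select any available set, add it to $\mathcal{F}$, and discard every set meeting the chosen one in $r$ or more rows. Selecting a set $B$ discards at most $\sum_{j=r}^{k} {k \choose j}{m-k \choose k-j}$ sets, whose dominant term (at $j = r$) is ${k \choose r}{m-k \choose k-r} = \Theta(m^{k-r})$. Since each step removes $O(m^{k-r})$ candidates from a pool of size $\Theta(m^k)$, the process runs for at least $\Omega(m^r)$ steps, and by construction every pairwise intersection in the resulting family is at most $r-1$, so the associated matrix avoids $P_{r,2}$.

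I expect the lower bound to be the more delicate step: one must verify that the greedy family is genuinely $P_{r,2}$-avoiding (each newly added set was still available, hence conflicts with no previously chosen set) and that the per-step discard count is $O(m^{k-r})$ uniformly. The estimate itself is routine, since $k$ and $r$ are fixed and each ${m-k \choose k-j}$ is a polynomial in $m$ of degree $k-j$, but it is where the exponent $r$ in the lower bound is pinned down. If an exact asymptotic constant were desired, one could instead invoke the R\"odl nibble to realize $|\mathcal{F}| = (1-o(1)){m \choose r}/{k \choose r}$, but the greedy argument already gives the claimed order $\theta(m^r)$.
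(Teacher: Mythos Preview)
Your argument is correct, but it follows a genuinely different route from the paper, particularly for the lower bound.

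For the upper bound, the paper simply invokes the previous lemma at $k=r$ to get $ex_r(m,P_{r,2})=\binom{m}{r}$ and then uses that $ex_k$ is nonincreasing in $k$. Your double-counting of $r$-subsets is equally short and in fact yields the slightly sharper inequality $ex_k(m,P_{r,2})\le \binom{m}{r}/\binom{k}{r}$; both give $O(m^r)$.

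For the lower bound, the paper does not use a greedy packing. Instead it inducts on $k$: starting from the base case $k=r$ (all $\binom{m}{r}$ $r$-subsets as columns), it passes from a matrix with $k-1$ ones per column to one with $k$ ones per column by forming an auxiliary graph $G_M$ on the columns (edges join columns sharing exactly $r-1$ rows), greedily $(\Delta(G_M)+1)$-colouring it, and appending one new row per colour class so that columns in the same class each get a one in their own new row. The degree bound ensures the number of rows grows only by a constant factor at each step, so after finitely many steps one still has $\binom{m}{r}$ columns in $O(m)$ rows. Your greedy deletion argument is more elementary and works in one shot for every $k\ge r$; it is the standard packing-number estimate and avoids the inductive bookkeeping entirely. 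The paper's construction, on the other hand, is more explicit and keeps the number of columns exactly $\binom{m}{r}$ throughout, which they use in the paragraph after the proof to pin down the constants $a_{r,k}$.
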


\begin{proof}
First observe that $ex_r(m,P_{r,2}) = O(m^r)$, according to the last lemma. Since $ex_k(m, P)$ is decreasing in $k$ for every $P$, $ex_k(m,P_{r,2}) = O(m^r)$ for $k \geq r$.

For any $0-1$ matrix $M$, let $G_{M}$ be the graph obtained from $M$ by letting every column of $M$ be a vertex and adding an edge between two vertices if and only if their corresponding columns have ones in exactly $r-1$ common rows. Note that $r-1$ is the maximum number of rows a pair of columns may share without containing $P_{r,2}$. We proceed by induction on $k$ to prove that $ex_k(m,P_{r,2}) = \Omega(m^r)$. 

The first case is $k = r$. The last lemma implies that $ex_r(m,P_{r,2}) = {m \choose r}$. Observe that if $M$ is a $0-1$ matrix with $m$ rows and $r$ ones in each column which avoids $P_{r, 2}$, then the maximum degree of any vertex of $G_{M}$, $\Delta(G_{M})$, is at most $r(m-r)$.

Fix $k > r$.  Let $M$ be the matrix obtained in the last inductive step avoiding $P_{r,2}$ with $m' \leq xm$ rows, ${m \choose r}$ columns, and $k-1$ ones in each column, such that $\Delta(G_{M}) \leq ym$ for some constants $x$ and $y$ that depend only on $r$ and $k$. 

Using a greedy algorithm, color the vertices in $G_{M}$ using $\Delta(G_{M})+1$ colors so that no two vertices with a common edge have the same color. Let $C: V_{G_{M}} \rightarrow \left\{1, \ldots, \Delta(G_{M})+1 \right\}$ be the coloring function. Construct a new matrix $M'$ from $M$ by adding $\Delta(G_{M})+1$ new rows to the bottom of $M$ with a one in row $m'+r$ of column $b$ if and only if $C(b)=r$. Since $\Delta(G_{M}) \leq ym$, the resulting matrix $M'$ has at most $xm+ ym +1$ rows.

Fix a column $b$ of $M'$. A column $c$ is a neighbor of $b$ in $G_{M'}$ but not in $G_{M}$ only if there are exactly $r-2$ rows in $M$ which have ones in both $b$ and $c$, and $b$ and $c$ were assigned the same number in the coloring of $G_{M}$. Since $b$ has $k-1$ rows with ones in $M$, there are ${k-1 \choose r-2}$ ways to choose $r-2$ rows that contain ones in column $b$ of $M$. For every set of $r-2$ rows in which column $b$ has ones in $M$, there are at most $\frac{xm-(k-1)}{(k-1)-(r-2)}$ possible new neighbors $c$ of $b$ in $M'$ with ones in those rows, since every pair of new neighbors of $b$ in $G_{M'}$ with ones in those $r-2$ rows get the same color in $G_{M}$ and thus have no common rows in $M$ containing ones besides the $r-2$ rows each neighbor shares with $b$. Then $\Delta(G_{M'}) \leq \frac{xm-(k-1)}{(k-1)-(r-2)}{k-1 \choose r-2}+ym$, completing the induction.
\end{proof}

We proved that for each $k \geq r$, there exists a constant $a_{r,k}$ such that $ex_k(a_{r,k}m, P_{r,2}) \geq {m \choose r}$. To generalize the bounds for all integral values of $m$, let $n$ satisfy $a_{r,k}n \le m \le a_{r,k}(n+1)$ and observe that $ex_k(a_{r,k} n, P_{r,2}) \geq {n \choose r} \geq {\frac{m}{a_{r,k}}-1 \choose r}$. Hence $ex_k(m, P_{r,2}) = \theta(m^r)$ for all fixed $k \geq r$.

\begin{cor}
For all fixed $k \geq r$ and $c \geq 2$, $ex_k(m,P_{r,c}) = \theta(m^r)$.
\end{cor}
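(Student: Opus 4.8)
The plan is to derive the corollary by sandwiching $ex_k(m,P_{r,c})$ between two quantities that are both $\theta(m^r)$, so that no new fixed-point induction is required. For the lower bound, observe that $P_{r,2}$ is obtained from $P_{r,c}$ by deleting columns, hence any matrix avoiding $P_{r,2}$ also avoids $P_{r,c}$; since these two patterns have the same number of rows $r$, any column with at least $k$ ones is admissible for both extremal functions simultaneously. Therefore every matrix witnessing the lower bound $ex_k(m,P_{r,2}) = \Omega(m^r)$ from the preceding theorem is also a valid $P_{r,c}$-avoiding matrix with $k$ ones per column, giving $ex_k(m,P_{r,c}) \geq ex_k(m,P_{r,2}) = \Omega(m^r)$.

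For the upper bound I would argue in the reverse direction: a matrix that avoids $P_{r,2}$ certainly avoids $P_{r,c}$ for $c \geq 2$, but the containment we need runs the other way, so instead I would reduce $P_{r,c}$-avoidance to a $P_{r,2}$-style counting bound directly. The cleanest route is to reuse the pigeonhole idea underlying the earlier lemma $ex_k(m,P_{k,c}) = (c-1)\binom{m}{k}$, but applied at the level of $r$-subsets rather than $k$-subsets. First I would note that if a matrix $M$ has $m$ rows and $k \geq r$ ones in every column and avoids $P_{r,c}$, then no set of $r$ rows can have ones in $c$ or more columns of $M$, for otherwise those $c$ columns restricted to those $r$ rows would form $P_{r,c}$. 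Each column of $M$ contains $\binom{k}{r}$ distinct $r$-subsets of rows that hold ones, and each such $r$-subset can be charged to at most $c-1$ columns. Counting incidences between columns and the $r$-subsets they realize gives $\binom{k}{r} \cdot (\text{number of columns}) \leq (c-1)\binom{m}{r}$, so $ex_k(m,P_{r,c}) \leq \frac{(c-1)\binom{m}{r}}{\binom{k}{r}} = O(m^r)$ for fixed $k \geq r$.

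Combining the two displays, $\Omega(m^r) \leq ex_k(m,P_{r,c}) \leq O(m^r)$, which yields $ex_k(m,P_{r,c}) = \theta(m^r)$ for all fixed $k \geq r$ and $c \geq 2$, as claimed.

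The step I expect to require the most care is the upper-bound incidence count: I must verify that the forbidden configuration really is an $r$-subset of rows shared by $c$ columns (so that the charge per $r$-subset is $c-1$ and not something weaker), and that summing $\binom{k}{r}$ over all columns does not overcount, since a given $r$-subset may be realized by several distinct columns but is capped at $c-1$ of them. The main obstacle is thus confirming that the double-counting is tight enough to give the correct exponent $r$ rather than a larger power; because $k$ is fixed the factor $\binom{k}{r}$ is a constant and the bound is genuinely $\theta(m^r)$, but I would double-check that the lower-bound matrices from the previous theorem avoid $P_{r,c}$ and not merely $P_{r,2}$, which follows immediately since avoiding the smaller pattern $P_{r,2}$ forces avoidance of the larger pattern $P_{r,c}$.
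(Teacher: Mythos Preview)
Your argument is correct. The lower bound is exactly the paper's: $P_{r,2}$ is contained in $P_{r,c}$, so $ex_k(m,P_{r,c}) \geq ex_k(m,P_{r,2}) = \Omega(m^r)$ by the preceding theorem. For the upper bound the paper takes a slightly different route: it simply quotes the earlier lemma $ex_r(m,P_{r,c}) = (c-1)\binom{m}{r}$ and then uses the monotonicity of $ex_k$ in $k$ (already noted in the proof of the preceding theorem) to deduce $ex_k(m,P_{r,c}) \leq ex_r(m,P_{r,c}) = O(m^r)$ for all $k \geq r$. Your direct double-counting at level $k$ is the same pigeonhole idea carried out without passing through the $k=r$ case; it is just as short and in fact yields the sharper constant $ex_k(m,P_{r,c}) \leq (c-1)\binom{m}{r}/\binom{k}{r}$, at the cost of redoing the incidence count rather than citing the lemma plus monotonicity.
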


\begin{proof}
For $c \ge 2$, $ex_k(m,P_{r,c}) \ge ex_k(m,P_{r,2})$ since $P_{r,c}$ contains $P_{r, 2}$. Therfore $ex_k(m,P_{r,c})= \Omega(m^r)$ for all $k \ge r$. Since $ex_{r}(m,P_{r,c})=(c-1){m \choose r}$, then $ex_{k}(m, P_{r,c}) = O(m^r)$ for all $k \ge r$. This shows that $ex_{k}(m,P_{r,c})= \theta(m^r)$ for all fixed $k \ge r$ and $c \ge 2$.    
\end{proof}

\begin{cor}
If $P$ is a $0-1$ matrix with $r$ rows which contains $P_{r,2}$, then $ex_{k}(m, P) = \theta(m^r)$ for all fixed $k \ge r$.
\end{cor}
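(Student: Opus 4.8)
The plan is to sandwich $P$ between the two all-ones patterns $P_{r,2}$ and $P_{r,c}$, where $c$ is the number of columns of $P$, and then invoke monotonicity of $ex_k(m, \cdot)$ under containment together with the asymptotics already established for all-ones patterns. The underlying monotonicity principle, used implicitly in the proof of the preceding corollary, is that if a $0-1$ matrix $N$ contains $N'$, then every matrix avoiding $N'$ also avoids $N$ (by transitivity of containment), so the matrices counted by $ex_k(m, N')$ form a subcollection of those counted by $ex_k(m, N)$, whence $ex_k(m, N) \ge ex_k(m, N')$.

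For the lower bound, the hypothesis that $P$ contains $P_{r,2}$ gives $ex_k(m, P) \ge ex_k(m, P_{r,2})$, and the theorem guarantees $ex_k(m, P_{r,2}) = \Omega(m^r)$ for every fixed $k \ge r$; hence $ex_k(m, P) = \Omega(m^r)$. For the upper bound, I would first note that since $P$ contains $P_{r,2}$ it has at least two columns, so $c \ge 2$, and since $P$ has exactly $r$ rows and $c$ columns it is itself a submatrix of the all-ones matrix $P_{r,c}$, i.e. $P_{r,c}$ contains $P$. Monotonicity then yields $ex_k(m, P) \le ex_k(m, P_{r,c})$, and the preceding corollary gives $ex_k(m, P_{r,c}) = O(m^r)$ for every fixed $k \ge r$; hence $ex_k(m, P) = O(m^r)$.

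Combining the two bounds gives $ex_k(m, P) = \theta(m^r)$ for all fixed $k \ge r$. There is no substantive obstacle here: once the two containments, namely $P_{r,2}$ into $P$ and $P$ into $P_{r,c}$, are observed, the result follows immediately from the established asymptotics for all-ones patterns. The only point meriting a moment's care is confirming $c \ge 2$ so that the preceding corollary applies, which is automatic because $P$ already contains the two-column pattern $P_{r,2}$.
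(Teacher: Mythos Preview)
Your proof is correct and is precisely the argument the paper intends: the corollary is stated without proof because it follows immediately by sandwiching $P$ between $P_{r,2}$ and $P_{r,c}$ via the same containment-monotonicity used in the preceding corollary. Your observation that $c \ge 2$ is needed to invoke that corollary is a nice point of care, and the containment $P \subseteq P_{r,c}$ holds trivially since $P$ has the same dimensions as $P_{r,c}$ and the latter is all ones.
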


\section{Bar $s$-visibility hypergraphs and $0-1$ matrices}\label{bar}

A bar $s$-visibility hypergraph's vertices are finite, disjoint, horizontal bars in the plane and its edges are subsets of vertices of size $s+2$ for which a vertical segment exists which intersects only the vertices in the edge. Using the fact that every bar visibility graph (i.e. bar $0$-visibility hypergraph) is a planar graph, Fulek converted $0-1$ matrices into bar $0$-visibility hypergraphs to prove linear bounds on the extremal functions of patterns $L_{1}$ and $L_{2}$ \cite{Ful}.

For $r, s \geq 0$, define $T_{r,s}$ to be the collection of matrices $M$ which have $r+s+2$ rows and $r+2s+2$ columns such that $M$ restricted to the first $s+1$ columns and rows $2, \ldots, s+2$ is a $(s+1)\times (s+1)$ permutation matrix, $M$ restricted to the last $s+1$ columns and rows $2, \ldots, s+2$ is a $(s+1) \times (s+1)$ permutation matrix, $M$ restricted to the middle $r$ columns and the last $r$ rows is an $r \times r$ permutation matrix, and $M$ has ones in the middle $r$ columns in row $1$. For example $T_{1,0}$ contains a single $3 \times 3$ matrix with four ones in a diamond formation. One of the patterns in $T_{4,1}$ appears in Figure \ref{trs}, with black squares representing ones and white squares representing zeroes.

We extend the method from \cite{Ful} to show that $exs(n, T_{r,s}) = O(n)$ and $exs_{k}(m, T_{r,s}) = \theta(\frac{m}{k})$ for all $r, s \geq 0$. The first step is to prove a linear bound on the number of edges in a bar $s$-visibility hypergraph with $n$ vertices. This proof is similar to the proof of the maximum number of edges in bar $s$-visibility graphs with $n$ vertices in \cite{Dean}. We assume that all bar endpoints have distinct coordinates since this does not decrease the maximum number of edges.

\begin{figure}[htbp]
\begin{center}
\includegraphics[scale=.5]{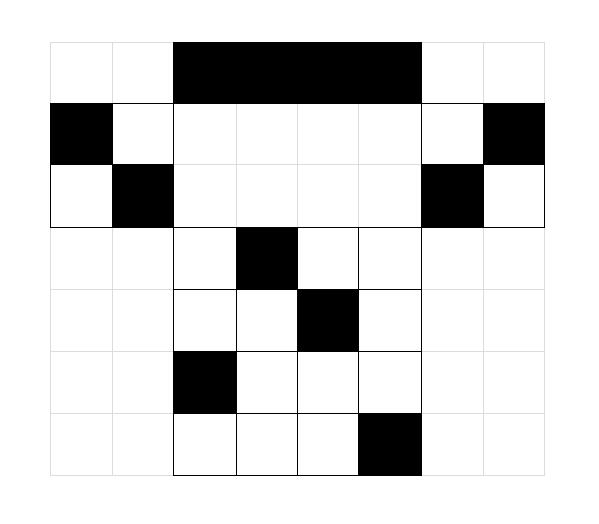}
\caption{A matrix in $T_{4,1}$}
\label{trs}
\end{center}
\end{figure}

\begin{lem}
All bar $s$-visibility hypergraphs with $n$ vertices have at most $(2s+3)n$ edges.
\end{lem}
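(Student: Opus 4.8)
The plan is to sweep a vertical line from left to right across the arrangement of bars and to charge each edge to the endpoint event at which it first appears. Since we may assume all endpoint coordinates are distinct, the $x$-coordinates of the $2n$ endpoints are all different, and between consecutive endpoints the set of bars meeting the sweep line is constant. At any fixed $x$, the bars crossing the sweep line all contain that $x$ in their horizontal range, so they pairwise overlap in $x$ and hence, being disjoint, occupy distinct heights; list them as $b_1 < b_2 < \cdots < b_t$ in increasing $y$-order. A vertical segment at this $x$-coordinate meets exactly a block of consecutive bars $b_i, b_{i+1}, \ldots, b_j$, so it realizes an edge precisely when it meets $s+2$ consecutive bars. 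Thus the edges realizable at a given sweep position are exactly the \emph{windows} of $s+2$ consecutive active bars, and every edge of the hypergraph is such a window for some position.

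Next I would track how the family of windows changes as the sweep line crosses an endpoint. There are exactly $n$ left endpoints (insertions, each adding one bar to the active list) and $n$ right endpoints (deletions, each removing one), and I would charge every edge to the first event at which it becomes a window. At a left endpoint inserting a bar $X$, the post-insertion order is the pre-insertion order with $X$ spliced in, and a window not containing $X$ is consecutive after the insertion if and only if it was consecutive before; hence the only genuinely new windows are those containing $X$, of which there are at most $s+2$, one for each position $X$ can occupy within a block of $s+2$. At a right endpoint deleting a bar $X$ with lower active neighbor $a$ and upper active neighbor $b$, a window not containing $X$ becomes newly consecutive only if it straddles the former position of $X$, that is, only if it contains the now-adjacent pair $a,b$; such a window is determined by how many of its members lie at or below $a$, a count ranging from $1$ to $s+1$, so at most $s+1$ new windows are created.

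Summing over all events then gives at most $(s+2)n$ edges charged to insertions and at most $(s+1)n$ charged to deletions, for a total of $(2s+3)n$, the claimed bound. The main point to get right is the bookkeeping at deletions: I must argue that removing $X$ cannot produce a new window away from the gap it leaves, since a window disjoint from the height interval spanned by $X$ was already consecutive before the deletion, and that the windows crossing the gap number at most $s+1$. A secondary subtlety is that a fixed set of $s+2$ bars can cease to be consecutive and later become consecutive again as intervening bars are inserted and deleted, so I would charge each edge only to its \emph{first} appearance; since this can only decrease the tally, the upper bound is unaffected.
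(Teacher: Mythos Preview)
Your proof is correct and takes essentially the same approach as the paper: a left-to-right sweep that charges each edge to the first endpoint event at which it becomes realizable, with at most $s+2$ new edges charged to each left endpoint (insertion) and at most $s+1$ to each right endpoint (deletion). Your write-up is somewhat more explicit than the paper's in justifying these two counts via the ``window of $s+2$ consecutive active bars'' description, but the underlying argument is identical.
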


\begin{proof}
Scan any representation of the given bar $s$-visibility hypergraph in the plane from left to right, making a list of distinct edges. Add an edge to the list whenever the scan for the first time shows part of the representation in which a vertical segment can be drawn which intersects each of the vertices in the edge. Then edges will only be added to the list whenever the scan passes the left or right end of some bar.

For each bar $B$, the maximum possible number of edges added to the list when the scan passes the left end of $B$ is $s+2$ since there are at most $s+2$ vertical segments representing different edges which pass through the left end of $B$ and through $s+1$ other bars. The maximum possible number of edges added to the list when the scan passes the right endpoint of $B$ is $s+1$, since at the right endpoint of $B$ there are at most $s+1$ vertical segments representing different edges which pass through $s+2$ bars other than $B$, at least one of which is below $B$ and at least one of which is above $B$. Then there are at most $(2s+3)n$ edges on the list since the representation contains $n$ bars.
\end{proof}

In the next lemma, we change $0-1$ matrices avoiding $T_{r,s}$ into bar $s$-visibility hypergraphs, and then show that the resulting hypergraphs have edge multiplicity at most $r-1$.

\begin{thm}
\label{kvis}
For all $r \geq 0$ and $s \geq 0$, $exs(n, T_{r,s}) = \theta(n)$.
\end{thm}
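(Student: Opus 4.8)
The plan is to prove the lower and upper bounds separately. The lower bound is immediate: every matrix in $T_{r,s}$ has ones in at least two distinct columns (its apex and a bottom one when $r \ge 1$, its two side permutations when $s \ge 1$, and two ones in a single row when $r=s=0$), so a single column of $n$ ones avoids every matrix in the collection and already has $n$ ones, giving $exs(n,T_{r,s}) \ge n$. Combined with the matching upper bound $exs(n,T_{r,s}) = O(n)$, which is the substance of the argument, this yields $\theta(n)$. So everything reduces to proving $exs(n,T_{r,s}) = O(n)$.

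For the upper bound I would follow the method indicated in the text. Let $A$ be an $n\times n$ matrix avoiding $T_{r,s}$ with $exs(n,T_{r,s})$ ones, and build a bar $s$-visibility hypergraph $H$ on the $n$ rows of $A$, each row becoming a horizontal bar stretching from its leftmost to its rightmost one. For a one at position $(a,p)$ that is not the topmost one of column $p$, let its \emph{apex} be the nearest one strictly above it in column $p$; when exactly $s+1$ other bars have ranges containing $p$ strictly between these two ones, the apex together with those $s+1$ spanning bars forms an $(s+2)$-element edge of $H$, and I charge the one at $(a,p)$ to this edge. All but $O(n)$ of the ones of $A$ can be charged this way, so it suffices to bound the total number of charges by $O(n)$. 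Since the preceding lemma gives at most $(2s+3)n$ distinct edges in $H$, the whole problem reduces to showing that avoiding $T_{r,s}$ forces every edge of $H$ to carry at most $r-1$ charges (its \emph{multiplicity}); this yields $exs(n,T_{r,s}) \le (r-1)(2s+3)n + O(n) = O(n)$.

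To prove the multiplicity bound I would argue contrapositively: an edge of multiplicity at least $r$ produces a copy of some matrix in $T_{r,s}$. Such an edge consists of an apex bar $B_0$ and spanning bars $B_1,\dots,B_{s+1}$, and it is charged by ones lying below it in $r$ columns $p_1 < \dots < p_r$. At each $p_\ell$ the apex $B_0$ carries an actual one, so $B_0$ supplies the apex row of $T_{r,s}$ and $p_1,\dots,p_r$ supply its $r$ middle columns. Because each $B_t$ $(t \ge 1)$ has a range covering every $p_\ell$, it owns a one weakly left of $p_1$ and a one weakly right of $p_r$; using the assumption that all bar endpoints have distinct coordinates, the $s+1$ leftmost of these ones occupy distinct columns and the $s+1$ rightmost occupy distinct columns, which I would package into the two $(s+1)\times(s+1)$ side permutations. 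Finally, the $r$ charged ones themselves, one in each middle column $p_\ell$ and all lying below $B_{s+1}$, are meant to realize the bottom $r\times r$ permutation.

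The main obstacle is exactly this last permutation. The two requirements on the construction pull against each other: to control the number of ones I want to charge as many ones as possible (counting columns), whereas to recover the $r\times r$ permutation I need the $r$ witnessing ones to lie in $r$ distinct rows, which a plain count of columns does not guarantee—two charged ones can sit on a single wide bar. I expect the resolution to lie in defining the multiplicity geometrically, as the number of distinct bars that appear as the first bar directly below the edge while the realizing vertical segment sweeps across the interval on which the edge exists; this quantity counts distinct lower bars (hence distinct rows) rather than columns, it is still bounded by $r-1$ under avoidance of $T_{r,s}$, and the distinctness of bar endpoints keeps it consistent with the $O(n)$ bookkeeping for the ones. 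Making this definition precise, checking that the charged ones can then be chosen in distinct rows, and verifying that the three permutations and apex line up in the correct row and column order to form a genuine member of $T_{r,s}$, is where the real work of the proof lies; once it is done, the contradiction with $A$ avoiding $T_{r,s}$ closes the argument.
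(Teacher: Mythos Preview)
Your overall strategy---convert the matrix to a bar $s$-visibility hypergraph, invoke the $(2s+3)n$ edge bound, and show that avoidance of $T_{r,s}$ caps edge multiplicity at $r-1$---matches the paper. The obstacle you flag (forcing the $r$ witnessing ones into distinct rows) is genuine, but your proposed geometric redefinition of multiplicity is harder than needed and leaves unclear how to tie it back to a count of ones. You also have a second gap you did not notice: your argument for the two side permutations appeals to ``distinct bar endpoints,'' but that assumption is a perturbation used only inside the edge-counting lemma; for bars read off an actual $0$--$1$ matrix, two rows may well have their leftmost (or rightmost) ones in the same column, so distinctness of the $s+1$ side columns is not free.

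The paper resolves both issues at once with a single preprocessing step. Before building any bars, delete from $M$ the first and last $s+1$ ones in every row and the last $r$ ones in every column, losing only $(2s+2+r)n = O(n)$ ones; call the trimmed matrix $M'$ and build $H$ from $M'$. Now suppose an edge of $H$ has multiplicity $\ge r$, realized in columns $c_1,\dots,c_r$ with top row $u_1$ and spanning rows $u_2,\dots,u_{s+2}$. The deleted ones supply the permutation blocks: each column $c_i$ has $r$ reserved bottom ones in $M$, so a greedy (Hall-type) choice gives distinct rows $v_1,\dots,v_r$ among them; each row $u_i$ ($i\ge 2$) has $s+1$ reserved leftmost ones in $M$, giving distinct columns $d_1,\dots,d_{s+1}$ by the same greedy argument, and likewise $s+1$ reserved rightmost ones give $e_1,\dots,e_{s+1}$. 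Since bars in $M'$ start only after the deleted left ones and end before the deleted right ones, every $d_i$ lies strictly left of every $c_j$, every $e_i$ strictly right, and every $v_i$ strictly below $u_{s+2}$. The submatrix on these rows and columns contains a member of $T_{r,s}$, a contradiction.

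So the missing idea is not to redefine multiplicity, but to \emph{reserve} the extremal ones in each row and column in advance as witnesses for the three permutation blocks.
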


\begin{proof}

Let $M$ be an $n \times n$ matrix which avoids $T_{r,s}$. Define $M'$ to be the matrix obtained from $M$ by deleting the first $s+1$ and last $s+1$ ones in every row, and the last $r$ ones in every column. Construct a representation of a bar $s$-visibility hypergraph $H$ from $M'$ by drawing a bar in each row that contains a one in $M'$ with left end at the first one of $M'$ in the row and right end at the last one of $M'$ in the row. For each one in $M'$ with at least $s+1$ ones below it in $M'$, draw a vertical line segment representing an edge starting from the one and extending through $s$ bars until reaching the $(s+1)^{st}$ bar below the one.

Suppose for contradiction that $H$ contains some edge $e$ with multiplicity at least $r$. Let $u_{1}, \ldots, u_{s+2}$, in order from top to bottom, be the rows of $M'$ which contain the vertices in the edge $e$, and let $c_{1}, \ldots, c_{r}$ be columns of $M'$ which contain $r$ vertical segments representing the copies of $e$. Let $v_{1}, \ldots, v_{r}$ be distinct rows of $M$ such that $v_{i}$ contains one of the bottommost $r$ ones of $c_{i}$ for each $i = 1, \ldots, r$; let $d_{1}, \ldots, d_{s+1}$ be distinct columns of $M$ such that $d_{i}$ contains one of the $s+1$ leftmost ones of $u_{i}$ for each $i = 2, \ldots, s+2$; and let $e_{1}, \ldots, e_{s+1}$ be distinct columns of $M$ such that $e_{i}$ contains one of the $s+1$ rightmost ones of $u_{i}$ for each $i = 2, \ldots, s+2$. Then the submatrix of $M$ consisting of rows $u_{1}, \ldots, u_{s+2}, v_{1}, \ldots, v_{r}$ and columns $c_{1}, \ldots, c_{r}, d_{1}, \ldots, d_{s+1}, e_{1}, \ldots, e_{s+1}$ contains a matrix in $T_{r,s}$, a contradiction.

Then every edge of $H$ has multiplicity less than $r$, so the number of ones in $M$ is at most $(3s+3+r)n+(r-1)(2s+3)(n-r)$.
\end{proof}

Observe that every element of $T_{r,s}$ contains the pattern $L_{3}$ for $r \geq 3$ and $s \geq 1$, which implies the following corollary.

\begin{cor}
$ex(n, L_{3}) = \theta(n)$.
\end{cor}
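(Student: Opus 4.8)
The plan is to deduce this corollary directly from Theorem \ref{kvis} together with the containment observation stated just above the corollary. First I would fix a convenient pair of parameters, say $r = 3$ and $s = 1$, so that the observation guarantees that every matrix in the collection $T_{3,1}$ contains $L_3$. The logical heart of the argument is monotonicity of the weight extremal function under pattern containment: if every element of $T_{3,1}$ contains $L_3$, then any $0$–$1$ matrix $A$ that avoids $L_3$ must avoid each matrix in $T_{3,1}$ as well. Indeed, were $A$ to contain some $M \in T_{3,1}$, then since $M$ in turn contains $L_3$, transitivity of the containment relation would force $A$ to contain $L_3$, a contradiction. Consequently every $n \times n$ matrix avoiding $L_3$ is counted by the collection extremal function, giving the inequality
\[
ex(n, L_3) \le exs(n, T_{3,1}).
\]

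For the upper bound I would then invoke Theorem \ref{kvis} with $r = 3$, $s = 1$, which yields $exs(n, T_{3,1}) = \theta(n)$ and in particular $exs(n, T_{3,1}) = O(n)$. Chaining this with the displayed inequality gives $ex(n, L_3) = O(n)$. For the matching lower bound I would appeal to the elementary remark from the introduction: since $L_3$ has at least two ones and those ones do not all lie in a single row or single column, a matrix whose ones occupy only one row (or only one column) avoids $L_3$, so $ex(n, L_3) \ge n = \Omega(n)$. Combining the two bounds establishes $ex(n, L_3) = \theta(n)$.

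There is no substantive obstacle here; the work has already been done in establishing Theorem \ref{kvis}, and this corollary is essentially a specialization. The only point requiring care is confirming the containment $L_3 \preceq M$ for every $M \in T_{3,1}$, which underlies the observation. To see this one matches the three ones of row $1$ of $L_3$ to the three ones that $T_{3,1}$ places in row $1$ across its middle columns; the isolated one below the center of that triple comes from the $r \times r = 3 \times 3$ permutation block in the last $r$ rows and middle columns; and the lower-left and lower-right ones of $L_3$ are supplied by the two flanking $2 \times 2$ permutation blocks (the $s = 1$ case), one on each side. Since this identification is independent of which permutations appear in the blocks, the containment holds uniformly across the collection, which is exactly what the preceding observation asserts.
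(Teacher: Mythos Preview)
Your proposal is correct and follows exactly the approach the paper intends: the paper's proof is nothing more than the one-line observation preceding the corollary, namely that every element of $T_{r,s}$ contains $L_3$ for $r \ge 3$ and $s \ge 1$, combined with Theorem~\ref{kvis}. You have simply written out the implicit monotonicity step and the lower bound explicitly, which is fine.
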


The next two corollaries follow from Corollary \ref{linear}. 

\begin{cor}
$exs_{k}(m, T_{r,s}) = \theta(\frac{m}{k})$.
\end{cor}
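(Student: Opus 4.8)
The plan is to deduce this statement directly from Corollary \ref{linear}, which converts a linear weight bound into a tight column bound provided a mild structural hypothesis holds. To apply it with $S = T_{r,s}$, I must verify its two hypotheses: that $exs(n, T_{r,s}) = O(n)$, and that every matrix in $T_{r,s}$ contains the $2 \times 2$ identity matrix or two ones in the same row. The first hypothesis is immediate from Theorem \ref{kvis}, which establishes $exs(n, T_{r,s}) = \theta(n)$ and in particular the upper bound $O(n)$.

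The second hypothesis is the only point requiring genuine checking, and it follows from the defining structure of $T_{r,s}$. Each matrix $M$ in $T_{r,s}$ has its first $s+1$ columns restricted to rows $2, \ldots, s+2$ equal to an $(s+1)\times(s+1)$ permutation matrix, and likewise its last $s+1$ columns restricted to those same rows equal to a permutation matrix. Since $M$ has $r+2s+2$ columns, the first $s+1$ and last $s+1$ columns occupy disjoint blocks. Consequently, for each row among $2, \ldots, s+2$ there is a one in the first block and a one in the last block, so that row contains two ones. Because the row range $2, \ldots, s+2$ is always nonempty (as $s+2 \geq 2$), every matrix in $T_{r,s}$ has two ones in the same row, which verifies the hypothesis via its ``two ones in a row'' disjunct.

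With both hypotheses established, Corollary \ref{linear} applies and yields $exs_k(m, T_{r,s}) = \theta(\frac{m}{k})$ at once. I anticipate no substantive obstacle here: the argument is a direct citation of Theorem \ref{kvis} together with a one-line structural observation, so the only care needed is to confirm that the ``two ones in a row'' condition holds uniformly across all $r, s \geq 0$, including the boundary case $s = 0$, where the two ones of row $2$ come from the single-column permutation blocks at the far left and far right of $M$.
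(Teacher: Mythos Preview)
Your proof is correct and follows exactly the route the paper takes: the paper simply states that this corollary follows from Corollary~\ref{linear}, and you have spelled out the verification of its hypotheses, citing Theorem~\ref{kvis} for the linear bound and observing that rows $2,\ldots,s+2$ of any $M\in T_{r,s}$ each contain two ones. Your treatment of the edge case $s=0$ (and implicitly $r=0$) is sound, since the first $s+1$ and last $s+1$ columns of an $(r+2s+2)$-column matrix are always disjoint for $r,s\ge 0$.
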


\begin{cor}
$ex_{k}(m, L_{3}) = \theta(\frac{m}{k})$.
\end{cor}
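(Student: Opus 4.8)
The plan is to obtain this as an immediate instance of Corollary \ref{linear}, applied to the singleton collection $S = \{L_3\}$. With this choice we have $exs(n, S) = ex(n, L_3)$ and $exs_k(m, S) = ex_k(m, L_3)$ by the notational convention for collections containing a single matrix, so it suffices to verify the two hypotheses of Corollary \ref{linear}.

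First I would check the linear weight bound $ex(n, L_3) = O(n)$. This is already in hand: the immediately preceding corollary gives $ex(n, L_3) = \theta(n)$, which itself rests on Theorem \ref{kvis} together with the observation that every matrix in $T_{r,s}$ with $r \geq 3$ and $s \geq 1$ contains $L_3$. Second, I would verify the structural side condition that $L_3$ contains the $2 \times 2$ identity matrix or two ones in the same row. Reading off Figure \ref{Lpatt1}, the first row of $L_3$ is $(0,1,1,1,0)$, which contains three ones, in particular two ones in the same row, so this condition holds at once. Equivalently, rows $2$ and $1$ restricted to columns $1$ and $2$ already form a $2 \times 2$ identity matrix.

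With both hypotheses confirmed, Corollary \ref{linear} yields $ex_k(m, L_3) = \theta(\frac{m}{k})$, which is the desired conclusion. I do not anticipate any genuine obstacle, since the statement is simply a specialization of Corollary \ref{linear} to a single pattern and all of the substantive work has already been carried out: the linear upper bound on the weight extremal function, the reduction from the weight to the column extremal function in Lemma \ref{reverse} (supplying the $O(\frac{m}{k})$ upper bound), and the explicit reflected block construction in the proof of Corollary \ref{linear} (supplying the matching $\Omega(\frac{m}{k})$ lower bound). The only point worth noting is that this lower bound goes through precisely because $L_3$ has two ones in a common row, whereas the construction places at most one one in each row; hence the constructed matrix cannot contain $L_3$.
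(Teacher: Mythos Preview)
Your proposal is correct and follows exactly the paper's approach: both simply invoke Corollary~\ref{linear} for the singleton $S=\{L_3\}$, using the previously established $ex(n,L_3)=O(n)$ and the fact that $L_3$ has two ones in its first row. One small slip: your ``equivalently'' remark that rows $2$ and $1$ in columns $1,2$ form a $2\times 2$ identity is not a valid instance of containment, since submatrices must preserve row order (the submatrix on rows $\{1,2\}$ and columns $\{1,2\}$ is the anti-identity); this is harmless, as the two-ones-in-a-row condition already suffices.
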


\section{Acknowledgments}
The authors thank the Research Science Institute, the MIT math department, Tanya Khovanova, Pavel Etingof, and Slava Gerovitch for supporting this research.

\end{document}